\newtheorem{theorem}{Theorem}[section]
\newtheorem{corollary}[theorem]{Corollary}
\newtheorem{lemma}[theorem]{Lemma}
\theoremstyle{definition}
\theoremstyle{remark}
\newcommand{\calK}{\mathcal{K}}
\newcommand{\R}{\mathds{R}}
\newcommand{\N}{{\mathds{N}}}
\newcommand{\E}{\mathbb{E}}
\title  [Exit time]{ On the mean exit time from a ball for a symmetric  stable process}
\begin{document}
\author[M. Ryznar]{Micha{\l} Ryznar}

\thanks{ M. Ryznar was supported in part by the National Science Centre, Poland, grant no. 2023/49/B/ST1/03964, }
\address{Faculty of Pure and Applied Mathematics, Wroc{\l}aw University of Science and Technology, Wyb. Wyspia{\'n}skiego 27, 50-370 Wroc{\l}aw, Poland.}

\email{Michal.Ryznar@pwr.edu.pl}

\

\maketitle

\begin{abstract} Getoor in \cite{Getoor} calculated   the mean   exit time from a ball for  the standard isotropic $\alpha$-stable process in $\R^d$ starting from the interior of the ball. The purpose of this note is to show that, up to multplicative constant, the same formula is valid for any symmetric $\alpha$-stable process. 
\end{abstract}
\section{Introduction} Let $X_t$ be a $d$-dimensional  symmetric   $\alpha$-stable processes of index $\alpha \in (0,2)$, $d \in \N$. Let $\E^x = \E^x(\ \cdot\ |X_0=x)$ and let $B_r= \{z\in \R^d; |z|<r\}$. In this note we deal with the mean exit of the process $X$ from  $B_r$, that  is we study 
$\E^x \tau^X_{B_r},$
where 
$$\tau^X_{B_r}= \inf\{t>0, |X_{t}|\ge r\},\quad r>0.$$
Getoor in \cite{Getoor} studied  the mean   exit time from a ball in the case of the standard isotropic stable process $\tilde{X}$ and he calculated that 

$$\E^x \tau^{\tilde{X}}_{B_r}= C(\alpha,d) (r^2-|x|^2)_+^{\alpha/2}.$$

It is a bit surprising that the above formula holds (up to a multiplicative constant) for any symmetric stable process. Our main result is the following theorem. 

\begin{theorem} \label{main}Let  $\nu$ be the L\'evy measure  of a symmetric $\alpha$-stable process  $X_t$. Then 
$$ E^x\tau^X_{B_r} =\frac{\kappa_\alpha} {\nu(B_1^c)}  (r^2-|x|^2)_+^{\alpha/2},\quad x\in R^d,
$$
where $\kappa_\alpha=\frac1{\Gamma(1-\alpha/2)\Gamma(1+\alpha/2)}  $.
\end{theorem}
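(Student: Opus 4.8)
The plan is to exhibit $u(x):=\frac{\kappa_\alpha}{\nu(B_1^c)}(r^2-|x|^2)_+^{\alpha/2}$ as the solution of the exterior Dirichlet problem $\calL u=-1$ on $B_r$, $u\equiv 0$ on $B_r^c$, where $\calL$ is the generator of $X$, and then to identify $u$ with $\E^x\tau^X_{B_r}$ by a verification argument. Since $\nu$ is symmetric, for sufficiently regular $f$ the generator admits the symmetric form
\[
\calL f(x)=\tfrac12\int_{\Rd}\bigl(f(x+y)+f(x-y)-2f(x)\bigr)\,\nu(dy),
\]
and, decomposing $\nu$ in polar coordinates through its finite symmetric spectral measure $\Lambda$ on the unit sphere,
\[
\calL f(x)=\int_{S^{d-1}}\Bigl(\tfrac12\int_0^\infty\bigl(f(x+s\theta)+f(x-s\theta)-2f(x)\bigr)\frac{ds}{s^{1+\alpha}}\Bigr)\Lambda(d\theta),
\]
with the normalization $\Lambda(S^{d-1})=\alpha\,\nu(B_1^c)$.

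The decisive observation is that the inner bracket is a one–dimensional operator acting on the restriction of $f$ to the line through $x$ in direction $\theta$. Taking $f(z)=(r^2-|z|^2)_+^{\alpha/2}$ and writing $g_\theta(s)=f(x+s\theta)$, completing the square gives
\[
r^2-|x+s\theta|^2=\ell^2-(s-m)^2,\qquad m=-\langle x,\theta\rangle,\ \ \ell=\sqrt{\langle x,\theta\rangle^2+r^2-|x|^2},
\]
so that $g_\theta(s)=\bigl(\ell^2-(s-m)^2\bigr)_+^{\alpha/2}$ is exactly the one–dimensional ``interval profile''. Because $|x|<r$ forces $\ell^2-m^2=r^2-|x|^2>0$, the point $s=0$ lies in the interior of $(m-\ell,m+\ell)$. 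I would then invoke the one–dimensional identity (the case $d=1$ of the theorem, provable by a direct computation) that the operator $A g(t):=\tfrac12\int_0^\infty\bigl(g(t+s)+g(t-s)-2g(t)\bigr)s^{-1-\alpha}\,ds$ applied to $(\ell^2-t^2)_+^{\alpha/2}$ is a constant $-\gamma_\alpha$ on $(-\ell,\ell)$, independent of $\ell$. By translation this yields $A g_\theta(0)=-\gamma_\alpha$ for every $\theta$ and every $x\in B_r$.

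Integrating this $x$– and $\theta$–independent value over the sphere gives $\calL f(x)=-\gamma_\alpha\,\Lambda(S^{d-1})=-\gamma_\alpha\,\alpha\,\nu(B_1^c)$ for all $x\in B_r$ — constant, as required. The constant $\gamma_\alpha$ is read off by evaluating $A$ at the centre $t=0$, where the integral collapses to
\[
\int_0^\infty\bigl((1-s^2)_+^{\alpha/2}-1\bigr)\frac{ds}{s^{1+\alpha}},
\]
which, after the substitution $t=s^2$ and an integration by parts producing $B(1-\alpha/2,\alpha/2)$, equals $-\tfrac12\Gamma(\alpha/2)\Gamma(1-\alpha/2)$; hence $\gamma_\alpha=\tfrac12\Gamma(\alpha/2)\Gamma(1-\alpha/2)$. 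Using $\Gamma(1+\alpha/2)=\tfrac\alpha2\Gamma(\alpha/2)$ one checks $\alpha\kappa_\alpha\gamma_\alpha=1$, so $\calL u=-1$ on $B_r$.

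Finally, $u$ is bounded and continuous, vanishes on $B_r^c$, and satisfies $\calL u=-1$ pointwise in $B_r$. A standard verification argument then identifies it with the mean exit time: for $x\in B_r$ the process $u(X_{t\wedge\tau})+(t\wedge\tau)$ is a martingale (Dynkin's formula), and since $\E^x\tau^X_{B_r}<\infty$ and $u$ is bounded, optional stopping and dominated convergence give $u(x)=\E^x\tau^X_{B_r}$. The main obstacle is twofold: establishing the one–dimensional constancy identity (the single genuinely computational input, equivalent to Getoor's formula in dimension one), and justifying the verification step despite $u$ having only H\"older regularity at $\partial B_r$ — the latter handled by first stopping strictly inside $B_r$, where $u$ is smooth and $\calL u$ is applied classically, and then letting the stopping level approach $\tau$.
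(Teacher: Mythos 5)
Your proposal is correct and follows essentially the same route as the paper: your completing-the-square reduction of the spectral (polar) form of the generator to the one-dimensional Getoor identity along each direction $\theta$ is exactly Lemma \ref{basic} and Corollary \ref{gen1}, and your verification step---stopping strictly inside $B_r$ where the function is $C^2$, applying the generator classically, then letting the stopping level approach the boundary---is exactly how the paper handles the H\"older boundary behavior. The only differences are cosmetic: you recover the constant $\gamma_\alpha=\tfrac12\Gamma(\alpha/2)\Gamma(1-\alpha/2)$ by evaluating at the center (a correct computation), where the paper instead carries Getoor's normalization $c_\alpha$, and the paper implements your ``Dynkin's formula'' step in detail via the local Ito formula, a quadratic-variation argument showing the small-jump stochastic integral is a true martingale, and the L\'evy system formula.
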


Our approach relies on the one-dimensional version of the result of \cite{Getoor}. Actually we apply the fact that the function $\R\ni u\to (r^2-|u|^2)_+^{\alpha/2}$ is in the domain of the generator  of the $L^1$ semigroup generated by the one-dimensional standard $\alpha$-stable process and the value of  the generator is a negative constant in the interval $(-r,r)$ (see \cite[Theorem 5.2.]{Getoor}). Next, using this result, we easily show that the value of a  generator of any $d$-dimensional symmetric stable process evaluated (pointwise) for a function $w(x) = (r^2-|x|^2)_+^{\alpha/2},\quad x\in R^d$ is equal to a negative constant if $|x|<r$. This allows to apply the local Ito formula (see eg. \cite[Lemma 3.8]{BKP}) to conclude our main result.

\section{Mean exit time calculations}
Let ${\mathcal S}= \{z\in \R^d; |z|=1\}$ be  the standard unit  sphere in $\R^d$
Let $\mu$ be a finite positive measure on $\mathcal S$.
Let $A(x), x\in \R^d$ be a family of $d\times d$   matrices. 
For a given $0<\alpha <2$ we consider the operator of the following form.
$$
\calK f(x) =      \int_{\mathcal S}\int_{\R}\left[f(x + A(x)z w) - f(x)\right] \,  \frac{dw}{|w|^{1 + \alpha}}\mu(dz),
$$
for any Borel function $f: \R^d \to \R$ and any $x \in \R ^d$ such that  the right hand side exists in the sense of principal value integral.

Next, for a given vector $v\neq 0$ in $\R^d$ let us introduce 
$$
\calK_v f(x) =     \int_{\R} \left[f(x + v w) - f(x)\right] \, \frac{dw}{|w|^{1 + \alpha}},  x\in\R^d, 
$$
again as the principal value integral. 
Let $v^*=v/|v|$. By a simple change of variables we have
\begin{equation}\label{scaling}
\calK_v f(x) =     |v|^\alpha\int_{\R} \left[f(x + v^* w) - f(x)\right] \, \frac{dw}{|w|^{1 + \alpha}}= |v|^\alpha\calK_{v^*} f(x).
\end{equation}
For $r>0$ we define
$$S_r(x)= c_\alpha (r^2-|x|^2)_+^{\alpha/2},\quad x\in \R^d, $$
where $$c_\alpha=\frac\alpha{2\Gamma(1-\alpha/2)\Gamma(1+\alpha/2)}.$$ 
The following simple  lemma is a key observation for the rest of this note.
\begin{lemma}\label{basic} If $v\in \R^d$, then 
$$\calK_{v} S_r(x)=-|v|^{\alpha}, |x|<r.$$
\end{lemma}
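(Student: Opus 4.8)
The plan is to reduce the $d$-dimensional principal value integral to a one-dimensional computation along the direction of $v$, and then to combine the one-dimensional constancy statement of Getoor quoted in the introduction with an explicit evaluation of the resulting constant. First, by the scaling identity \eqref{scaling} one has $\calK_v S_r(x)=|v|^\alpha\calK_{v^*}S_r(x)$ with $v^*=v/|v|$, so it suffices to prove $\calK_{v^*}S_r(x)=-1$ for $|x|<r$; the general case then follows by multiplying by $|v|^\alpha$. Thus I may assume from now on that $v$ is a unit vector.

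Next I would decompose $x=t\,v+x^\perp$ with $t=\langle x,v\rangle$ and $x^\perp\perp v$, so that $|x+vw|^2=|x^\perp|^2+(t+w)^2$. Setting $\rho^2=r^2-|x^\perp|^2$, the hypothesis $|x|<r$ gives $\rho>0$ and $\rho^2-t^2=r^2-|x|^2>0$, i.e. $|t|<\rho$. Writing $g_\rho(u)=(\rho^2-u^2)_+^{\alpha/2}$, one gets $r^2-|x+vw|^2=\rho^2-(t+w)^2$, whence
$$\calK_v S_r(x)=c_\alpha\int_{\R}\bigl[g_\rho(t+w)-g_\rho(t)\bigr]\,\frac{dw}{|w|^{1+\alpha}},$$
which is precisely $c_\alpha$ times the one-dimensional operator applied to $g_\rho$ at the interior point $t\in(-\rho,\rho)$. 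This is the step where the problem genuinely collapses to dimension one.

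The substitution $w=\rho\sigma$, together with the homogeneity $g_\rho(u)=\rho^\alpha g_1(u/\rho)$, shows that this one-dimensional integral equals its value at $\rho=1$, $s=t/\rho\in(-1,1)$, so it depends only on $s$. Getoor's Theorem 5.2 (the one-dimensional version cited in the introduction) asserts that this value is constant on $(-1,1)$, so it is enough to evaluate it at the single point $s=0$, namely
$$\int_{\R}\bigl[(1-w^2)_+^{\alpha/2}-1\bigr]\,\frac{dw}{|w|^{1+\alpha}}.$$
Here the integrand is $O(w^2)$ near the origin (since $g_1'(0)=0$), so no principal value is needed; splitting at $|w|=1$ produces a contribution $-2/\alpha$ from $|w|>1$, while the substitution $u=w^2$ followed by integration by parts turns the piece over $(0,1)$ into a boundary term $2/\alpha$ minus the Beta integral $\int_0^1 u^{-\alpha/2}(1-u)^{\alpha/2-1}\,du=\Gamma(1-\alpha/2)\Gamma(\alpha/2)$. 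The two $\pm 2/\alpha$ terms cancel, leaving $-\Gamma(1-\alpha/2)\Gamma(\alpha/2)$, which equals $-1/c_\alpha$ by $\Gamma(1+\alpha/2)=(\alpha/2)\Gamma(\alpha/2)$. Multiplying by $c_\alpha$ gives $\calK_{v^*}S_r(x)=-1$, and the initial scaling step yields $\calK_v S_r(x)=-|v|^\alpha$ for $|x|<r$. I expect the Beta-integral evaluation, with the careful cancellation of the boundary contributions, to be the only genuinely computational point; the conceptual content is the reduction to one dimension and the appeal to Getoor's constancy result, which spares us from computing the integral for general $s$.
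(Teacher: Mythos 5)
Your proof is correct, and its core is the same as the paper's: reduce to $|v|=1$ via \eqref{scaling}, collapse the $d$-dimensional principal-value integral to a one-dimensional one for $u\mapsto(\rho^2-u^2)_+^{\alpha/2}$ at an interior point, and invoke Getoor's one-dimensional theorem. Two points differ in execution. First, you pass to one dimension by decomposing $x=tv+x^\perp$ directly along $v$, whereas the paper first rotates $v$ to $e_1$ (using the rotational invariance $S_r\circ R=S_r$) and then splits $x=x_1e_1+\tilde x$; these are equivalent, and your version is marginally cleaner since it avoids introducing the rotation. Second, and more substantively, the paper simply quotes Getoor's result in the form that the (normalized) integral equals $-1$ for every radius and every interior point, while you borrow from Getoor only the \emph{constancy} on $(-1,1)$, rescale to $\rho=1$, and evaluate the constant at $s=0$ by hand. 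Your Beta-integral computation checks out: the region $|w|>1$ contributes $-2/\alpha$, integration by parts over $(0,1)$ after the substitution $u=w^2$ gives $2/\alpha-\Gamma(1-\alpha/2)\Gamma(\alpha/2)$, and indeed $\Gamma(1-\alpha/2)\Gamma(\alpha/2)=1/c_\alpha$ because $\Gamma(1+\alpha/2)=(\alpha/2)\Gamma(\alpha/2)$. What this buys you is independence from the exact normalizing constant in Getoor's statement, making the lemma self-contained up to the constancy assertion; what it costs is a computation the paper avoids entirely. One small caveat you gloss over and the paper is explicit about: Getoor proved the one-dimensional identity only almost everywhere, and since your argument evaluates the integral at the single point $s=0$ and transfers that value to all $s\in(-1,1)$, it genuinely requires \emph{pointwise} constancy; this is obtained by checking that the left-hand side is continuous in the interior point, a remark you should include.
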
\begin{proof}
By \eqref{scaling}, it is clear that it is enough to prove the lemma for $|v|=1$.
Next, we claim that  we can assume that $v$ can be taken as $e_1=(1,0,\dots,0)$. Indeed, if $R$ is a rotation such that $Rv=e_1$, then we have 
$$\calK_{v} S_r(x)= \calK_{v} (S_r\circ R) (x)= \calK_{e_1} S_r( R x), |x|<r.$$

Now, we recall
the one-dimensional version of the  result obtained by Getoor \cite[Theorem 5.2.]{Getoor}. Let $s_r(u)= c_\alpha(r^2-|u|^2)^{\alpha/2}_+, u\in \R$, then
for $-r<u<r$, where $r>0$
		\begin{equation}\label{Getoor}
\int_{\R} \left[s_r(u +  w) - s_r(u)\right] \, \frac{dw}{|w|^{1 + \alpha}}= -1.
\end{equation}
		Actually in \cite{Getoor} the above equality  was proved almost surely, but it is not difficult to show that the left hand side is a continuous function of  $u\in (-r,r)$, hence it must hold point-wise on $(-r,r)$.
	Next for $|x|<r	$ we have $|x_1|< \sqrt{r^2-|\tilde{ x}|^2}$, where $\tilde{ x}=x-x_1e_1$ and, by \eqref{Getoor}, we obtain
		\begin{eqnarray}
\calK_{e_1} S_r(  x)&=&c_\alpha\int_{\R} \left[(r^2-|x +  we_1|^2)^{\alpha/2}_+ - (r^2-|x|^2)^{\alpha/2}_+\right] \, \frac{dw}{|w|^{1 + \alpha}}\nonumber\\
&=&c_\alpha\int_{\R} \left[(r^2-|\tilde{ x}|^2-|x_1 +  w|^2)^{\alpha/2}_+ - (r^2-|\tilde{ x}|^2-|x_1|^2)^{\alpha/2}_+\right] \, \frac{dw}{|w|^{1 + \alpha}}\nonumber\\
&=&\int_{\R} \left[s_{\sqrt{r^2-|\tilde{ x}|^2}}(x_1 +  w) - s_{\sqrt{r^2-|\tilde{ x}|^2}}(x_1)
\right] \, \frac{dw}{|w|^{1 + \alpha}}= -1.\nonumber\end{eqnarray}
This implies that 
 $$\calK_{v} S_r(x)=\calK_{e_1} S_r( R x)=-1,$$
since $|R (x)|=|x|<r$. \end{proof}
\begin{corollary} \label{gen1}
$$
\calK S_r(x) =     - \int_{\mathcal S} |A(x)z|^{\alpha} \mu(dz), |x|<r.
$$
If $A(x)$  is an isometry for all $|x|<r$, then 
$$
\calK S_r(x) =    -  \mu(\mathcal S)=-|\mu|, |x|<r.
$$ 
\end{corollary}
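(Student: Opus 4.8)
The plan is to recognize the inner $w$-integral in the definition of $\calK$ as a single instance of the operator $\calK_v$ treated in Lemma \ref{basic}, and then simply integrate the resulting constant against $\mu$. First I would fix $x$ with $|x|<r$ and $z\in\mathcal S$, and set $v=A(x)z$. Since $\calK S_r(x)$ is defined as the iterated integral with the $w$-integration performed first, the inner principal-value integral
$$
\int_{\R}\left[S_r(x + A(x)z\, w) - S_r(x)\right]\,\frac{dw}{|w|^{1+\alpha}}
$$
is precisely $\calK_{A(x)z} S_r(x)$. By Lemma \ref{basic} applied with this choice of $v$, it therefore equals $-|A(x)z|^{\alpha}$, at least whenever $A(x)z\neq 0$.

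Next I would dispose of the degenerate case: if $A(x)z=0$, the integrand $S_r(x+0)-S_r(x)$ vanishes identically, so the inner integral is $0=-|A(x)z|^{\alpha}$. Hence the identity $\calK_{A(x)z}S_r(x)=-|A(x)z|^{\alpha}$ holds for every $z\in\mathcal S$, and substituting this constant value into the outer integration over $\mathcal S$ yields
$$
\calK S_r(x)=\int_{\mathcal S}\calK_{A(x)z}S_r(x)\,\mu(dz)=-\int_{\mathcal S}|A(x)z|^{\alpha}\,\mu(dz),\quad |x|<r,
$$
which is the first assertion. For the second, if $A(x)$ is an isometry for all $|x|<r$, then $|A(x)z|=|z|=1$ for every $z\in\mathcal S$, so the integrand is identically $1$ and
$$
\calK S_r(x)=-\int_{\mathcal S}1\,\mu(dz)=-\mu(\mathcal S)=-|\mu|,\quad |x|<r.
$$

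I do not expect a substantial obstacle here, since the statement is essentially an integrated form of Lemma \ref{basic}; the only points meriting a line of care are the degenerate case $A(x)z=0$ (where Lemma \ref{basic}, stated for nonzero $v$, does not apply verbatim) and the observation that the principal-value structure of $\calK$ factors correctly as iterated integration in $w$ and then $z$, so that the value of the inner integral may be inserted directly. Both are immediate.
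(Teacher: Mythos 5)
Your proposal is correct and follows essentially the same route as the paper: recognize the inner $w$-integral as $\calK_{A(x)z}S_r(x)$, evaluate it via Lemma \ref{basic}, and integrate the result over $\mathcal S$ against $\mu$. The only difference is your explicit (and harmless) treatment of the degenerate case $A(x)z=0$, which the paper passes over in silence.
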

\begin{proof}
Note that, due to Lemma \ref{basic}, we have
$$\calK S_r(x)= \int_{\mathcal S} \calK_{A(x)z}S_r(x)\mu(dz)=  -\int_{\mathcal S}|A(x)z|^{\alpha} \mu(dz).$$
\end{proof}

Now, we are in a position to calculate the mean exit time of a ball for arbitrary symmetric stable process. 
\begin{proof} [proof of Theorem \ref{main}] By the scaling property it is enough to prove the result for $r=1$. Let $\nu$ be the L\'evy measure of the process $X_t$, then we can represent $\nu$ in polar coordinates $(z,r), z\in \mathcal S, r>0$ as
$$\nu(dx)= \mu(dz)\frac {dr}{r^{1+\alpha}},$$ where the measure $\mu$ is called the spectral measure of the process.   
 By  $\calK_\nu$ we denote  the generator of the process.
\begin{eqnarray*}
\calK_\nu f(x) &=& p.v. \int_{\R^d}\left[f(x + y) - f(x)\right] \nu(dy)\\ 
&=&\int_{\R^d}\left[f(x +y) - f(x)- \nabla f(x)\cdot y {\mathbf 1}_ {|y|\le 1} \right]\nu(dy)
 \end{eqnarray*}
We observe that the above  integral is finite for any $x$  such that $f$ is locally $C^2$ class in a neighborhood of $x$ and bounded on $\R^d$.
Using  polar coordinates and symmetry we can represent $\calK_\nu$ as
\begin{eqnarray*}\calK_\nu f(x) &=& \frac12\int_{S}\int_{\R}\left[f(x +z w) - f(x )\right] \,  \frac{dw}{|w|^{1 + \alpha}}\mu(dz)\\,
&=& \frac12\int_{S}\int_{\R}\left[f(x +z w) - f(x)- \nabla f(x)\cdot z w {\mathbf 1}_ {|w|\le 1} \right] \,  \frac{dw}{|w|^{1 + \alpha}}\mu(dz)\end{eqnarray*}
Let $0<r<1$. By $\tau_{r}$ we denote $\tau^X_{B_r}$.  Then $S=S_1$ is of $C^2(B_{(1+r)/2})$ class (all first and second order partial derivatives are bounded in $B_{(1+r)/2}$).
Since  $X_t$ is a pure jump process, from  the local Ito formula  formula ( see \cite[Lemma 3.8.]{BKP}) it follows that  
\begin{eqnarray}\label{Ito}S(X_{t \wedge \tau_{r}} )-S(X_{0})&=& 
\int_{0^+}^{t\wedge \tau_{r}}\nabla S(X_{u-})\cdot dX_u\nonumber\\&+&\sum_{0<u\le t\wedge \tau_{r}, |\Delta X_u|>0 }S(X_{u} )-S(X_{{u-}})-\nabla S(X_{u-})\cdot\Delta X_u\nonumber\\&:=& I_t+J_t.\end{eqnarray}
  We can split the intgral $I$ 
as the sum of two itegrals where we consider small and large jumps. Let $X^1_t$ and $X^2_t$ be two independent processes with L\'evy measures $\nu_1 = \nu|_{\overline{B}_1}$ and $\nu_2=\nu-\nu_1$, respectively. Then we can take 
$$X_t= X^1_t+X^2_t$$
and then we have 
$$I_t= \int_{0^+}^{t\wedge \tau_{r}}\nabla S(X_{u-})dX^1_u+ \int_{0^+}^{t\wedge \tau_{r}}\nabla S(X_{u-})dX^2_u:=I_t^1+I_t^2$$
Note that $X^2_t$ is a compound Poisson process and its jumps are the big jumps of the process $X_t$, hence 
 $$I_t^2 =\sum_{0<u\le t\wedge \tau_{r}, |\Delta X_u|> 1 }\nabla S(X_{u-})\cdot\Delta X_u.$$
Therefore we can rewrite \eqref{Ito} as 
\begin{eqnarray}\label{Ito1}S(X_{t \wedge \tau_{r}} )-S(X_{0})&=& 
I_t^1+\sum_{0<u\le t\wedge \tau_{r}, \Delta X_u\ne 0 }S(X_{u} )-S(X_{{u-}})-\nabla S(X_{u-})\cdot\Delta X_u\mathbf{1}_{|\Delta X_u|\le 1}\nonumber\\&:=& I_t^1+J_t^1.\end{eqnarray}

We claim that $I_t^1$ being a local martingale  is in fact a martingale.   This follows (see \cite[Corrolary 3, p.73 ] {P} ) since its quadratic variation process  $[I^1, I^1]_t$ is integrable.  Indeed, we have

\begin{eqnarray*}\E^x[I^1, I^1]_t&\le& \E^x d\sum_{u\le t\wedge \tau_{r}, |\Delta X_u|\le 1 } \max_{v\le t\wedge \tau_{r}}(|\nabla S(X_{v-})|^2)|\Delta X_u|^2\\
&\le& d C(r) \left(|x|^2+\E^x\sum_{0<u\le t, |\Delta X_u|\le 1 } |\Delta X_u|^2\right)\\
&=& dC(r)\left(|x|^2 +t\int_{|y|\le 1} |y|^2\nu(dy)\right)< \infty, \end{eqnarray*}
 where
$$C(r)= \max_{|x|<r} |\nabla S(x)|^2<\infty.$$

Next, using  L\'evy's system formula (see   \cite[Lemma 4.7]{BRSW} with $g(u)= \mathbf1_{(0,t\wedge \tau_{r}]}(u)$ ) we have 
\begin{equation}\label{Levy system}
\E^x J^1_t= \E^x\int_0^{t\wedge \tau_{r}} du\int_{\R^d}\left( S(X_{u}+y)-S(X_{{u}})-\nabla S(X_{u})\cdot y 
 \mathbf{1}_{|y|\le1}\right)\nu(dy).
\end{equation}
To justify that  we can use   L\'evy's system formula  we observe that 
$$\left|S(x+y)-S(x)-\nabla S(x)\cdot y \mathbf{1}_{|y|\le 1}\right|\le c \min(|y|^2,1),$$ if $|x|\le r $,
where $c=c(r)$.

Note that for $u< \tau_{r}$
$$\int_{\R^d}\left( S(X_{u}+y)-S(X_{{u}})-\nabla S(X_{u})\cdot y 
 \mathbf{1}_{|y|\le1}\right)\nu(dy)=\calK_\nu S(X_{{u}})=-\frac12|\mu|,$$
where the last equality follows from Corollary \ref{gen1} if we take $A(x)$ to be the identity matrix for all $x\in \R^d$.
Hence, by \eqref{Levy system}, we obtain
$$\E^x J^1_t= - \frac12|\mu| \E^x \tau_{r}$$
Since $\E^x I^1_t= 0$, as $I^1_t$ is a   martingale, by \eqref{Ito1}, we arrive at
$$\E^x(S(X_{t\wedge \tau_{r}} ))-S(x)= - \frac12|\mu|\E^x ( t\wedge \tau_{r}).$$
Letting $t\to \infty $, by bounded convergence theorem, 
$$\E^x(S(X_{ \tau_{r}} ))= S(x) - \frac12|\mu| \E^x \tau_{r}.$$
Observing that  $$\E^x(S(X_{ \tau_{r}} ))\le c_\alpha(1- r^2)^{\alpha/2}$$
and letting $r\uparrow1$, we get
$$S(x)= \frac12|\mu|\lim_{r\uparrow1}\E^x \tau_{r}= \frac12 |\mu|\E^x \tau_{1}.$$
Next we note that $$\nu(|y|\ge 1)= |\mu|\int_1^\infty \frac {dr}{r^{1+\alpha}}= \frac{|\mu|}{\alpha}.$$
Since $\frac{2c_\alpha}\alpha= \kappa_\alpha$ 
the proof is completed. 
\end{proof}

\textbf{ Acknowledgements.} 
The author is very grateful to K. Bogdan and T. Kulczycki  for discussions on the problem treated in the paper.

\end{document}